\newtheorem{theorem}{Theorem}
\newtheorem{proposition}{Proposition}
\newtheorem{lemma}{Lemma}
\theoremstyle{definition}
\newtheorem{definition}{Definition}
\begin{document}
\title{On a Question of Glasby, Praeger, and Xia}
\author{Michael~J.~J.~Barry}
\address{Department of Mathematics\\
Allegheny College\\
Meadville, PA 16335}
\email{mbarry@allegheny.edu}
\thanks{This work was done when the author was on sabbatical and he thanks Allegheny College for its support.}

\subjclass{20C20}

\begin{abstract}
Recently, Glasby, Praeger, and Xia asked for necessary and sufficient conditions for the `Jordan Partition' $\lambda(r,s,p)$ to be standard.  We give such conditions when $p$ is an odd prime.
\end{abstract}
\maketitle

\section{Introduction}\label{S:intro}
As usual $p$ is a prime number. There are different ways to explain the notion of Jordan Partition and we approach it via the modular representations of a finite cyclic $p$-group $G$ of order $q=p^\alpha$ over a field $K$ of characteristic $p$.  It is well-known that there are exactly $q$ isomorphism classes of indecomposable $KG$-modules.  Let $\{V_1,\dots,V_q\}$ be a set of representatives of these isomorphism classes with $\dim V_i=i$.  Many authors have investigated the decomposition of the $K G$-module $V_m \otimes V_n$, where $m \leq n$, into a direct sum of indecomposable $K G$-modules --- for example, in order of publication, see \cite{G1962}, \cite{S1964}, \cite{L1974},  \cite{N1995}, \cite{R1979}, \cite{H2003},  and \cite{B2011}. From the works of these authors, it is well-known that $V_m \otimes V_n$ decomposes into a direct sum $V_{\lambda_1} \oplus \dots \oplus V_{\lambda_m}$ of $m$ indecomposable $KG$-modules with $\lambda_1 \geq \dots \geq \lambda_m>0$, but that the dimensions $\lambda_i$ of the components depend on the characteristic $p$.  Following~\cite{GPX2014}, we define the {\bf Jordan Partition} $\lambda(m,n,p)$ of $m n$ by
\[\lambda(m,n,p)=(\lambda_1,\dots,\lambda_m).\]
We say that $\lambda(m,n,p)$ is {\bf standard} if{f} $\lambda_i=m+n-2 i+1$ for $1 \leq i \leq m$.

A sufficient reason for $\lambda(m,n,p)$ to be standard was given in~\cite[Theorem 2]{GPX2014}, and Problem 16 of the same paper asked for necessary and sufficient conditions.  We give these conditions now when $p$ is odd in the following two theorems which deal with the cases $m<p$ and $m \geq p$, respectively.

\begin{theorem}\label{Theorem2}
Assume that $p$ is odd.  Define $S=S_1 \cup S_2$, where $S_1=\{(k,d) \mid 1 \leq k \leq d \leq p+1-k\}$ and
\[
S_2=\{(k,b p+d) \mid b \geq 1, 1 \leq k \leq (p+1)/2,k-1 \leq d \leq p+1-k \}.
\]
If $1 \leq m<p$ and $n \geq m$, then $\lambda(m,n,p)$ is standard if{f} $(m,n) \in S$.
\end{theorem}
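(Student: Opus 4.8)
The plan is to reduce the set-theoretic statement to a single congruence criterion and then verify the bookkeeping. Concretely, I claim that for $2\le m<p$ and $n\ge m$ the partition $\lambda(m,n,p)$ is standard if and only if $m-1\le (n\bmod p)\le p+1-m$, and that membership $(m,n)\in S_1\cup S_2$ is just a repackaging of this inequality together with the trivial fact that $\lambda(1,n,p)=(n)$ is always standard. The guiding heuristic is geometric: the standard partition consists of the $m$ block sizes $n-m+1,n-m+3,\dots,n+m-1$, which fill out the interval $I=[\,n-m+1,\ n+m-1\,]$ of length $2m-2<2p$. I expect the decomposition to be standard exactly when $I$ meets no multiple of $p$ in its interior, i.e.\ when $I$ is trapped between two consecutive multiples $p\ell$ and $p(\ell+1)$; a short computation turns ``no interior multiple of $p$'' into the stated residue condition, with the two endpoints $n\bmod p=m-1$ and $n\bmod p=p+1-m$ corresponding to $I$ abutting a wall (still standard).

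To make this rigorous I would invoke the explicit decomposition of $V_m\otimes V_n$ available for $m<p$ (as developed in \cite{B2011} and \cite{GPX2014}). From such a formula I would extract two complementary facts. First, when $I$ lies strictly between consecutive multiples of $p$, every part equals its Clebsch--Gordan value $m+n-2i+1$, so $\lambda(m,n,p)$ is standard; here the case $n<p$ (where the only relevant wall is $p$ and the constraint is one-sided, giving $m+n\le p+1$) must be separated from the case $n\ge p$ (where the wall below $n$ also matters, giving the full two-sided band). Second, when some multiple $p\ell$ lies in the interior of $I$, the blocks straddling that wall are forced to ``reflect'', so at least one $\lambda_i$ differs from $m+n-2i+1$ and the partition is not standard. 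Along the way I would record that, for $n\ge p$, standardness depends only on $n\bmod p$ and not on the higher base-$p$ digits of $n$ --- a periodicity that is exactly what lets a single band of residues describe infinitely many $n$, and which the formula should make transparent.

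Finally I would translate the congruence criterion into membership in $S$. For $n<p$ the condition $m\le n\le p+1-m$ is precisely $(m,n)\in S_1$ with $(k,d)=(m,n)$; for $n\ge p$, writing $n=bp+d$ with $b\ge1$ and $0\le d\le p-1$, the band $m-1\le d\le p+1-m$ is precisely $(m,n)\in S_2$ with $(k,d)=(m,n\bmod p)$, the nonemptiness of the band forcing $m\le(p+1)/2$. The remaining regime $(p+1)/2<m<p$ is then automatic: the band $[\,m-1,\ p+1-m\,]$ is empty, so $\lambda(m,n,p)$ is never standard, matching the fact that both $S_1$ and $S_2$ are empty for such $m$. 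The trivial case $m=1$ is absorbed by $S_1$ (for $n\le p$) and $S_2$ (for $n\ge p$).

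The step I expect to be the main obstacle is the non-standard direction: showing that whenever a multiple of $p$ sits strictly inside $I$ the decomposition genuinely departs from Clebsch--Gordan, uniformly in the higher digits of $n$ and including the delicate residues just outside the band. Controlling the behaviour exactly at the walls $n\bmod p=m-1$ and $n\bmod p=p+1-m$ (standard) versus just outside them (non-standard) is the crux, and it is precisely here that the fine structure of the $m<p$ decomposition formula --- rather than mere dimension counts --- is indispensable.
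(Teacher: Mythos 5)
Your reformulation of the statement is correct: for $2\le m<p$ and $n=bp+d$ with $b\ge 1$, $0\le d<p$, membership in $S_2$ is exactly the band condition $m-1\le d\le p+1-m$ (the constraint $m\le(p+1)/2$ being forced by nonemptiness of the band), for $n<p$ membership in $S_1$ is $m+n\le p+1$, and your ``no multiple of $p$ in the interior of $[n-m+1,\,n+m-1]$'' picture translates precisely into these inequalities. The problem is that this is a restatement of the theorem, not a proof of it. The entire mathematical content --- that $\lambda(m,n,p)$ is standard if and only if the band condition holds --- is delegated to an ``explicit decomposition of $V_m\otimes V_n$ for $m<p$'' that you never write down or verify, and you yourself flag the non-standard direction (that a wall strictly inside $I$ genuinely perturbs some part, uniformly in the higher base-$p$ digits of $n$) as an unresolved obstacle. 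The periodicity in $n$ modulo $p$ that you want to ``record along the way'' is likewise asserted, not derived. As it stands, what you have actually proved is only the elementary equivalence between the residue band and membership in $S$, i.e., the last of your paragraphs.

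To close the gap you would have to do essentially what the paper does: induct on $m+n$ through the six cases of Definition~\ref{D:one}, using Proposition~\ref{P:one} to see in each case exactly when standardness is inherited. Writing $n=b_1p^k+d_1$, the recursion either peels off a single top part and reduces $(m,n)$ to $(m-1,n-1)$ (Cases~\ref{Case1} and~\ref{Case2}; this is where the endpoint $d=p+1-m$ is certified standard and $d>p+1-m$ is excluded, via the requirement $m+n-p^{k+1}=1$, resp.\ $c+d-p^k=1$), or reflects $n$ to $b_1p^k-d_1=(b-2r-1)p+(p-d)$, replacing $d$ by $p-d$ (Case~\ref{Case4}; this is where the endpoint $d=m-1$ and the failure for $d<m-1$ come from), with Lemma~\ref{Lemma1} as the base case $n<p$. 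Your reflection heuristic is an accurate description of what this recursion does, but the case analysis --- in particular checking that each reduced pair again satisfies the band condition, which is the inductive engine --- is the proof, and it is entirely absent from your write-up.
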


\begin{theorem}\label{Theorem3}
Assume that $p$ is odd.  Define 

\[S=\{(i p^t+(p^t \pm 1)/2,j p^t+(p^t \pm 1)/2 +k p^{t+1}) \mid 1 \leq i \leq (p-1)/2, i \leq j \leq p-i-1, k \geq 0\}.\]
Suppose that $p^t \leq m <p^{t+1}$ with $t \geq 1$ and $n \geq m$.  Then $\lambda(m,n,p)$ is standard if{f} $(m,n) \in S$.
\end{theorem}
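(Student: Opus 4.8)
The plan is to prove the characterization by induction on $t$, using Theorem~\ref{Theorem2} as the base case ($t=0$) and a descent lemma relating the range $p^t \le m < p^{t+1}$ to the range $p^{t-1} \le m' < p^t$. First I would recast ``standard'' in terms of base-$p$ digits: writing $m = \sum_j m_j p^j$ and $n = \sum_j n_j p^j$, I expect standardness of $\lambda(m,n,p)$ to be equivalent to an explicit condition on the digit sequences $(m_j)$ and $(n_j)$, which the set $S$ already discloses. Namely: the digits of $m$ below position $t$ must be exactly those of $(p^t-1)/2=\lfloor p^t/2\rfloor$ or of $(p^t+1)/2=\lceil p^t/2\rceil$ (all equal to $(p-1)/2$, except possibly the bottom digit, which may be $(p+1)/2$); the leading digit $i=m_t$ must satisfy $1 \le i \le (p-1)/2$; the digits of $n$ below position $t$ must agree with those of $m$; and $j=n_t$ must lie in the closed interval $[i,\,p-i-1]$, with the digits of $n$ above position $t$ unconstrained. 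My first task is thus to translate $S$ into precisely this digit statement, and then to prove the digit statement.

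The engine of the induction is two structural facts about Jordan partitions that I would establish (or quote from \cite{GPX2014} and its references). The first is a periodicity statement: for fixed $m$ with $p^t \le m < p^{t+1}$, the set of $n \ge m$ for which $\lambda(m,n,p)$ is standard is a union of residue classes modulo $p^{t+1}$; this is what produces the free parameter $k \ge 0$ (the summand $k p^{t+1}$) in $S$, and I would derive it from a tensor identity expressing $V_n$ in terms of $V_{n-p^{t+1}}$ together with a lemma that the additional free summand leaves the standard pattern undisturbed. The second is the descent lemma proper: tensoring against $V_{p^t}$, the inflation to $G$ of the regular representation of the quotient of order $p^t$, together with the factorization of $m$ and $n$ according to their lowest digit, reduces standardness of $\lambda(m,n,p)$ to a low-digit compatibility condition plus standardness of a Jordan partition one level down. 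Iterating the descent peels off one digit at a time; at each step the compatibility condition forces the current lowest digit of $m$ to equal $(p-1)/2$ and the matching digit of $n$ to agree, which is exactly how the residue $(p^t\pm1)/2$ is assembled, the only remaining freedom being the choice at the bottom digit that accounts for the sign $\pm$.

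Once the descent bottoms out at $t=0$, Theorem~\ref{Theorem2} supplies the surviving constraints on the top digits: the bound $m \le (p+1)/2$ there becomes $i \le (p-1)/2$ here, and the window $k-1 \le d \le p+1-k$ of $S_2$ is transported, through the carries introduced by the $(p^t\pm1)/2$ tails, into the window $i \le j \le p-i-1$. I would track these $\pm 1$ shifts carefully and handle the two sign cases in parallel, exploiting the reflection $j \mapsto p-1-j$ (visible in the symmetry of $[i,\,p-i-1]$ about $(p-1)/2$) to halve the casework. The main obstacle I anticipate is the necessity direction at the boundary: showing that any deviation of the low digits of $m$ from those of $(p^t\pm1)/2$, or of the digit $j=n_t$ from the closed interval $[i,\,p-i-1]$, genuinely destroys standardness. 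This requires pinning down the exact part sizes of $\lambda(m,n,p)$ near the extremal digits --- equivalently, controlling the ranks of the relevant powers of the nilpotent operator at the points where the binomial-coefficient carries first appear --- rather than merely the sufficiency afforded by the descent, and it is here that the delicate carry analysis must be made fully rigorous.
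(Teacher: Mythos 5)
There is a genuine gap --- in fact two. First, your opening ``translation'' of $S$ into a digit condition is wrong: you require the digits of $n$ below position $t$ to \emph{agree} with those of $m$, but in $S$ the two occurrences of $(p^t\pm1)/2$ carry independent signs (the paper's Lemma~\ref{Lemma2} explicitly allows $x=(p^t-1)/2$, $y=(p^t+1)/2$, and its proof of Theorem~\ref{Theorem3}, Case~\ref{Case3}, treats exactly that mixed case). Concretely, for $p=3$, $t=1$, the pair $(m,n)=(4,5)=(1\cdot 3+\tfrac{3-1}{2},\,1\cdot 3+\tfrac{3+1}{2})$ lies in $S$ and one checks from Definition~\ref{D:one} that $\lambda(4,5,3)=(8,6,4,2)$ is standard, yet the units digits of $4$ and $5$ disagree, so your digit criterion excludes it. Since everything downstream is organized around this criterion, the error propagates.

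Second, and more seriously, the engine of your induction --- the ``descent lemma'' obtained by ``tensoring against $V_{p^t}$'' --- is never stated precisely, and there is no reason to expect such a clean one-digit-at-a-time reduction to exist. The actual recursive structure of these decompositions (Definition~\ref{D:one}, equivalently the Renaud/Hou recursions) has six interleaved cases and does not factor through the regular module of a quotient in the way you suggest; the assertion that ``the compatibility condition forces the current lowest digit of $m$ to equal $(p-1)/2$'' is precisely the content of the theorem, asserted rather than derived. You yourself flag the necessity direction at the boundary as unresolved. (The periodicity step, by contrast, is fine in principle: it follows from the Glasby--Praeger--Xia periodicity theorem, since adding $p^{t+1}$ to $n$ and to every part preserves $\lambda_i=m+n-2i+1$.) For comparison, the paper avoids any digit calculus: both implications are proved by taking a minimal counterexample with respect to $m+n$ and running it through the six cases of Definition~\ref{D:one}, using Proposition~\ref{P:one} to convert standardness in each case into standardness of a strictly smaller pair that is checked to lie in (or outside) $S$, with Lemmas~\ref{Lemma2}--\ref{Lemma4} supplying the boundary cases $i=1$ and $m=p^t$. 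If you want to salvage your outline, you would need to (i) correct the digit description of $S$ to allow independent $\pm$ tails for $m$ and $n$, and (ii) replace the hoped-for tensor descent with an induction that genuinely tracks the recursion, which is essentially what the paper does.
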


In~\cite{B2011}, we gave a recursive definition of the combinatorial object $s_p(m,n)$ and proved that
\[s_p(m,n)=(\lambda_1,\dots,\lambda_m, \underbrace{0,\dots,0}_{n-m},-\lambda_m,\dots,-\lambda_1).\] 
In Section~\ref{S:$s_p$}, we will define $s_p(m,n)$, which will be the main tool in our proofs of Theorems~\ref{Theorem2}~and~\ref{Theorem3} in Section~\ref{S:Proofs}.

\section{Definition of $s_p(m,n)$}\label{S:$s_p$}
Assume that $m$ and $n$ are positive integers with $m \leq n$.  Before we give a formal recursive definition, let us say that $s_p(m,n)$ is a nonincreasing sequence of $m+n$ integers whose first $m$ terms are positive, whose last $m$ terms are negative, and whose middle $n-m$ terms all equal 0.  Further, letting $s_p(m,n)(k)$ denote the $k$th term of $s_p(m,n)$, the sequence is ``balanced around its middle''  in the sense that \[s_p(m,n)(m+n+1-k)=-s_p(m,n)(k), \qquad k=1,\dots,m+n,\]
and its positive terms sum to $m \cdot n$---so $\sum_{k=1}^m s_p(m,n)(k)=m \cdot n$.  For example, 
\[s_5(6,7)=(12,10,8,5,5,2,0,-2,-5,-5,-8,-10,-12)\]
and 
\[s_3(6,8)=(9,9,9,9,9,3,0,0,-3,-9,-9,-9,-9,-9). \]
The positive terms in $s_p(m,n)$ will turn out to be the dimensions of the indecomposable modules in the decomposition of $V_m \otimes V_n$.

We begin by explaining our notation.  All our sequences are finite nonincreasing sequences of integers.  If $s=(a_1,\dots,a_u)$ and $t=(b_1,\dots,b_v)$ are two sequences with $a_u \geq b_1$, then the sequence $s \oplus t$ is defined by
\[s \oplus t=(a_1,\dots,a_u,b_1,\dots,b_v),\]
the concatenation of the two sequences.  Following~\cite{GPX2014}, the {\bf negative reverse} $\overline{s}$ of $s$ is defined by $\overline{s}=(-a_u,\dots,-a_2,-a_1)$.  For an integer $m$ and a positive integer $k$, $(m:k)$ denotes the sequence
\[(\underbrace{m,\dots,m}_k).\] 
We will also denote the empty sequence $()$ by $(0:0)$.  If $s$ is a sequence, then $s_>$ and $s_<$, respectively, denote the subsequences of $s$ consisting of all positive terms, and all negative terms, respectively.  For example,
\[s_3(6,8)_{<}=(-3,-9,-9,-9,-9,-9). \]

For a sequence $s$ and an integer $k$,  $s+k$ denotes the sequence obtained from $s$ by adding $k$ to each of its terms. For example,
\[s_3(6,8)_{<}+2 \cdot 3^2=(15,9,9,9,9,9).\]

We now define $s_p(m,n)$ which was introduced in~\cite{B2011}.


\begin{definition}\label{D:one}
Let $p$ be a prime and let $m$ and $n$ be integers satisfying $0 \leq m \leq n$.  Define $s_p(0,n)=(0:n)$.  Assume now that $0<m \leq n$ and let $k$ be the unique nonnegative integer such that $p^k \leq n<p^{k+1}$.    Write $n= b p^k+d$ with $0<b<p$ and $0 \leq d <p^k$.  Write $m=a p^k +c$ with $0 \leq a<p$ and $0 \leq c <p^k$.  Note that $a+c>0$. We define $s_p(m,n)$ recursively as 
\[s_p(m,n)=s_1 \oplus s_2 \oplus s_3,\]
where $s_3=\overline{s_1}$ and $s_1$ and $s_2$ are given in the following exhaustive list of cases.
\begin{enumerate}
\item  Case 1: $m+n>p^{k+1}$.  Then 
$s_1=(p^{k+1}:m+n-p^{k+1})$ and $s_2=s_p(p^{k+1}-n,p^{k+1}-m)$. \label{Case1}
\item Case 2: $m+n \leq p^{k+1}$ and $c+d>p^k$.  Then
$s_1=((a+b+1)p^k:c+d-p^k)$ and $s_2=s_p((a+b+1)p^k-n,(a+b+1)p^k-m)$. \label{Case2}
\item Case 3: $m+n \leq p^{k+1}$, $1 \leq c+d \leq p^k$, and $a>0$.  Then
$s_1=s_p(\min(c,d),\max(c,d))+(a+b)p^k$ and $s_2=s_p((a+b)p^k-n,(a+b)p^k-m)$. \label{Case3}
\item Case 4: $m+n \leq p^{k+1}$, $1 \leq c+d \leq p^k$, $a=0$ (so $m=c$), and $d>0$.  Then
$s_1=s_p(m,b p^k-d)_<+2 b p^k$ and $s_2=(0:n-m)$. \label{Case4}
\item Case 5: $m+n \leq p^{k+1}$, $1 \leq c+d \leq p^k$, $a=0$, and $d=0$, so $(m,n)=(c,b p^k)$.  Then
$s_1=(b p^k:m)$ and $s_2=(0:b p^k-m)$. \label{Case5}
\item Case 6: $m+n \leq p^{k+1}$, $c=d=0$, so $0<a<a+b \leq p$.  Then
$s_1=((a+b-1)p^k:p^k)$ and $s_2=s_p((a-1)p^k,(b-1)p^k)$. \label{Case6}
\end{enumerate}
\end{definition} 

In Case~\ref{Case6}, one can show easily that
\[s_1=((a+b-1)p^k:p^k)\oplus ((a+b-3)p^k:p^k)\oplus \dots \oplus ((b-a+1)p^k:p^k)\] and $s_2=(0:(b-a)p^k)$.
When $k=0$, so $(m,n)=(a,b)$, this specializes to $s_1=(a+b-1,a+b-3, \dots, b-a+1)$ and $s_2=(0:b-a)$.

Recall that for a sequence $s$ and an integer $k$, $s(k)$ denotes the $k$th term of the sequence $s$.  The following result was proved in~\cite{B2011}.

\begin{theorem}\label{T:one}
For positive integers $m$ and $n$ with $m \leq n \leq q$, 
\[V_m \otimes V_n= \bigoplus_{k=1}^{m} V_{s_p(m,n)(k)}.\]
\end{theorem}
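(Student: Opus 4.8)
The plan is to prove Theorem~\ref{T:one} by strong induction on $n$, showing that the genuine multiset of indecomposable summands of $V_m\otimes V_n$ obeys exactly the six-case recursion that defines $s_p(m,n)$ in Definition~\ref{D:one}. By the Krull--Schmidt theorem the decomposition of $V_m\otimes V_n$ is well defined, and both sides of the claimed identity are determined by the recursion together with its base cases; so it suffices to verify the base cases directly and, for each of the six cases, to exhibit a module decomposition of $V_m\otimes V_n$ matching the prescribed $s_1\oplus s_2\oplus\overline{s_1}$. It is convenient to replace $g$ by $x=g-1$, so that $V_i\cong K[x]/(x^i)$ with $x$ nilpotent and $V_m\otimes V_n$ becomes the Kronecker product of unipotent Jordan blocks; the statement is then about its Jordan type. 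Since $g^{p^{k+1}}$ acts trivially on $V_m\otimes V_n$ (as $m\le n<p^{k+1}$), we may compute inside $K\overline{G}$ with $\overline{G}=G/G^{p^{k+1}}$ of order $p^{k+1}$, in which $V_{p^{k+1}}\cong K\overline{G}$ is free; the only configuration forcing $p^{k+1}>q$ is $n=q$, where $V_n$ is already free and the conclusion is immediate.

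Two facts drive the top-level case. First, because $V_{p^{k+1}}$ is free, $V_m\otimes V_{p^{k+1}}\cong m\cdot V_{p^{k+1}}$ by a dimension count. Second, for $i<p^{k+1}$ the projective cover gives a short exact sequence
\[
0\longrightarrow V_{p^{k+1}-i}\longrightarrow V_{p^{k+1}}\longrightarrow V_i\longrightarrow 0,
\]
so the syzygy operator satisfies $\Omega V_i\cong V_{p^{k+1}-i}$; since $K\overline{G}$ is self-injective, $\Omega$ is an involution on the stable module category, and because $-\otimes_K V_i$ is exact and preserves freeness it commutes with $\Omega$ up to free summands. Hence in the stable category
\[
V_{p^{k+1}-m}\otimes V_{p^{k+1}-n}\cong\Omega V_m\otimes\Omega V_n\cong\Omega^2(V_m\otimes V_n)\cong V_m\otimes V_n,
\]
so $V_m\otimes V_n$ and $V_{p^{k+1}-m}\otimes V_{p^{k+1}-n}$ differ only by free summands. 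In Case~\ref{Case1} ($m+n>p^{k+1}$) the pair $(p^{k+1}-m,p^{k+1}-n)$ has coordinate sum less than $p^{k+1}$, so $V_{p^{k+1}-n}\otimes V_{p^{k+1}-m}$ contributes no free summand; counting dimensions then forces exactly $m+n-p^{k+1}$ copies of $V_{p^{k+1}}$, giving
\[
V_m\otimes V_n\cong(m+n-p^{k+1})\cdot V_{p^{k+1}}\oplus\bigl(V_{p^{k+1}-n}\otimes V_{p^{k+1}-m}\bigr),
\]
which is precisely $s_1\oplus s_2\oplus\overline{s_1}$ of Case~\ref{Case1}.

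When $m+n\le p^{k+1}$ no free summand can occur, and the remaining cases are governed by the $p$-adic digit structure through a Frobenius/subgroup reduction. The clean model is Case~\ref{Case6} ($c=d=0$), where the absence of low digits reduces matters to the scaled Clebsch--Gordan law
\[
V_{ap^k}\otimes V_{bp^k}\cong p^k\cdot\bigl(V_{(a+b-1)p^k}\oplus V_{(a+b-3)p^k}\oplus\cdots\oplus V_{(b-a+1)p^k}\bigr),
\]
valid because $a+b\le p$ rules out any characteristic-$p$ carry; this follows from the Frobenius-twist factorization of the modules $V_{jp^k}$ together with the characteristic-zero decomposition of $V_a\otimes V_b$, and stripping off the top block recurses to $s_p((a-1)p^k,(b-1)p^k)$. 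The mixed cases refine this: in Case~\ref{Case3} ($a>0$, $c+d\le p^k$) the low digits contribute a translated copy $V_{\min(c,d)}\otimes V_{\max(c,d)}$ shifted up by $(a+b)p^k$, leaving a recursive remainder indexed by the complement $((a+b)p^k-n,(a+b)p^k-m)$, while Case~\ref{Case2} ($c+d>p^k$) applies the reflection mechanism one level down to split off $V_{(a+b+1)p^k}$. Cases~\ref{Case4} and~\ref{Case5} are the degenerations $a=0$, where one factor has no high digit and the decomposition truncates accordingly.

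The hard part will be the bookkeeping in the mixed cases~\ref{Case2}--\ref{Case4}. Each required module identity is, by itself, an exact-sequence or Frobenius-reduction argument, but one must check that the \emph{boundary conditions} separating the cases --- $m+n$ against $p^{k+1}$, $c+d$ against $p^k$, and the degenerations $a=0$ and $d=0$ --- coincide exactly with the thresholds at which the corresponding module-theoretic behaviour switches, and that in each case the reduced configuration has strictly smaller $n$ (which one verifies case by case) so that the induction is legitimate and terminates at Case~\ref{Case5} with $(m,n)=(c,bp^k)$ and Case~\ref{Case6} with $(m,n)=(p^k,bp^k)$. Once the six module identities and these base cases are established, the induction closes and Theorem~\ref{T:one} follows.
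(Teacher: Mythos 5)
First, a point of comparison: the paper does not actually prove Theorem~\ref{T:one} here --- it is quoted from \cite{B2011} --- so there is no in-paper argument to measure yours against, and your proposal must stand on its own. Your framing is right (Krull--Schmidt, strong induction following the six cases, checking that each recursive call strictly decreases the larger coordinate), and your treatment of Case~\ref{Case1} is essentially complete and correct: $\Omega V_i\cong V_{p^{k+1}-i}$, tensoring commutes with $\Omega$ modulo projectives, the bound $\lambda_1\le a+b-1$ shows $V_{p^{k+1}-n}\otimes V_{p^{k+1}-m}$ has no free summand, and the dimension count pins down $m+n-p^{k+1}$ copies of $V_{p^{k+1}}$.

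However, Cases~\ref{Case2}--\ref{Case4} and~\ref{Case6} --- which is where the content of the theorem actually lives --- are asserted rather than proved, and two of the mechanisms you invoke do not work as stated. (i) The ``Frobenius-twist factorization of the modules $V_{jp^k}$'' underlying your Case~\ref{Case6} argument is false: by Case~\ref{Case5} of Definition~\ref{D:one}, $V_j\otimes V_{p^k}\cong j\cdot V_{p^k}$, so no $j$-dimensional module $W$ satisfies $V_{p^k}\otimes W\cong V_{jp^k}$ (the left side is always a sum of copies of $V_{p^k}$, the right side is indecomposable of dimension $jp^k$); the scaled Clebsch--Gordan law is true but needs a different proof. (ii) Case~\ref{Case4} is the real crux: it claims the Jordan type of $V_m\otimes V_{bp^k+d}$ is obtained from that of $V_m\otimes V_{bp^k-d}$ by the reflection $\mu_i\mapsto 2bp^k-\mu_{m+1-i}$. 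Your syzygy argument reflects about $p^{k+1}$, where $V_{p^{k+1}}$ is projective; it says nothing about reflection about $bp^k$ with $1\le b<p$, where $V_{bp^k}$ is not projective, so an ``exact-sequence argument'' of the kind you describe does not produce this identity. A genuinely different device is needed (a relative-projectivity argument with respect to $V_{bp^k}$, or the explicit computations of Renaud or Hou), and since Cases~\ref{Case2} and~\ref{Case3} feed into Case~\ref{Case4} through the recursion, deferring this as ``bookkeeping'' leaves the theorem unproved.
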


It follows, as we had stated previously, that
\[\lambda(m,n,p)=(s_p(m,n)(1),s_p(m,n)(2),\dots,s_p(m,n)(m)).\] 

In the next result we characterize exactly when $\lambda(m,n,p)$ is standard for each of the six cases of Definition~\ref{D:one}.

\begin{proposition}\label{P:one}
The Jordan partition $\lambda(m,n,p)$ is standard if{f}
\begin{enumerate}
\item $m+n-p^{k+1}=1$ and $\lambda(p^{k+1}-n,p^{k+1}-m,p)$ is standard in Case~\ref{Case1}
\item $c+d-p^k=1$ and $\lambda((a+b+1)p^k-n,(a+b+1)p^k-m,p)$ is standard in Case~\ref{Case2}
\item $\lambda(\min(c,d),\max(c,d),p)$ is standard, $|c-d| \leq 1$ and $\lambda((a+b)p^k-n,(a+b)p^k-m,p)$ is standard in Case~\ref{Case3}
\item $\lambda(m,b p^k-d,p)$ is standard in Case~\ref{Case4}
\item $m=1$ in Case~\ref{Case5}
\item $k=0$ in Case~\ref{Case6}
\end{enumerate}
\end{proposition}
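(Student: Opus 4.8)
The plan is to reduce everything to one elementary observation about standardness together with the recursive shape of $s_p(m,n)$. First I would record what I will call the \emph{difference criterion}: since $\lambda(m,n,p)$ has exactly $m$ positive parts summing to $mn$, it is standard if and only if consecutive parts differ by exactly $2$, i.e.\ $\lambda_i-\lambda_{i+1}=2$ for $1\le i\le m-1$. Indeed, if all these differences equal $2$ then $\lambda_i=\lambda_1-2(i-1)$, and the constraint $\sum_i\lambda_i=mn$ forces $\lambda_1=m+n-1$, which is exactly standardness; the converse is immediate. The same computation applied to the full balanced sequence yields a companion fact, used only in Case~\ref{Case3}: for $0\le x\le y$ the entire sequence $s_p(x,y)$ has all consecutive differences equal to $2$ if and only if $\lambda(x,y,p)$ is standard and $y-x\le 1$ (all differences $2$ forces distinct terms, hence at most one middle $0$, hence $y-x\le1$; conversely one checks the transitions around the zero block, $1-(-1)=2$ when $y-x=0$ and $2-0=0-(-2)=2$ when $y-x=1$).

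The second ingredient is that in every case $s_p(m,n)=s_1\oplus s_2\oplus \overline{s_1}$, so the positive part satisfies $\lambda(m,n,p)=(s_1)_>\oplus(s_2)_>$. Because $s_p(m,n)$ is nonincreasing with its first $m$ terms positive, the sign pattern of $s_1$ and $s_2$ can be read off purely from their lengths: whenever $s_1$ occupies at most the first $m$ slots it is entirely positive, and whenever $s_2$ is a string of zeros it contributes nothing. I would verify these length bounds once and for all (they are short inequalities among the digits $a,b,c,d$), so that in Cases~\ref{Case1}--\ref{Case3} one has $\lambda(m,n,p)=s_1\oplus\lambda(\text{sub})$ with $s_1>0$, while in Cases~\ref{Case4}--\ref{Case6} one has $\lambda(m,n,p)=s_1$.

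With these in hand each case becomes a short application of the difference criterion. In Cases~\ref{Case1} and~\ref{Case2}, $s_1$ is a block of equal entries ($m+n-p^{k+1}$ copies of $p^{k+1}$, respectively $c+d-p^k$ copies of $(a+b+1)p^k$); two or more copies create a difference $0$, so standardness forces that count to be $1$, giving the first condition. When the count is $1$ the single entry equals $m+n-1$, and a one-line computation shows the top part of the recursive summand is $m+n-3$, so the difference across the junction is automatically $2$ and standardness reduces to standardness of that summand. Case~\ref{Case3} is the only genuinely two-sided case: here $s_1=s_p(\min(c,d),\max(c,d))+(a+b)p^k$, so its internal differences are all $2$ exactly when $\lambda(\min(c,d),\max(c,d),p)$ is standard and $|c-d|\le1$ (the companion fact), while the differences inside $(s_2)_>=\lambda((a+b)p^k-n,(a+b)p^k-m,p)$ are all $2$ exactly when that partition is standard; the one remaining difference, across the junction between $s_1$ and $s_2$, works out to $2$ by the same arithmetic as before, so no extra condition appears.

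Finally, Cases~\ref{Case4}--\ref{Case6} have $s_2$ a string of zeros, so $\lambda(m,n,p)=s_1$. In Case~\ref{Case4}, $s_1=\overline{\lambda(m,bp^k-d,p)}+2bp^k$; adding a constant and taking the negative reverse both preserve the multiset of consecutive differences, so $\lambda(m,n,p)$ is standard iff $\lambda(m,bp^k-d,p)$ is, by the difference criterion applied to each. In Cases~\ref{Case5} and~\ref{Case6}, $s_1$ is a concatenation of constant blocks of length $m$, respectively $p^k$; such a block forces a difference $0$ unless it has length $1$, which happens precisely when $m=1$ (Case~\ref{Case5}) or $k=0$ (Case~\ref{Case6}), and in those degenerate cases $s_1$ is visibly the standard partition. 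I expect the main obstacle to be Case~\ref{Case3}: keeping straight the three interacting differences and, in particular, confirming that the cross-junction difference is forced to equal $2$ rather than becoming a separate hypothesis. The bookkeeping of the signs and lengths of the recursive sub-sequences---including degenerate situations such as $\min(c,d)=0$ or an empty recursive summand, where the relevant $\lambda$ is vacuously standard---also needs care throughout.
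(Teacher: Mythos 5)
Your proposal is correct and follows essentially the same route as the paper: a case-by-case inspection of the concatenation $s_1\oplus s_2\oplus\overline{s_1}$, with the condition $|c-d|\le 1$ in Case~\ref{Case3} arising exactly as in the paper from repeated zeros in $s_p(\min(c,d),\max(c,d))$. The paper dismisses the other five cases as ``completely obvious,'' whereas you make the underlying verification explicit via the consecutive-difference criterion and the junction computations; that is a welcome elaboration, not a different method.
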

\begin{proof}
All except Case~\ref{Case3} are completely obvious.  In this case, we note that since $s_p(\min(c,d),\max(c,d))$ and not just $\lambda(\min(c,d),\max(c,d),p)$ is involved in $\lambda(m,n,p)$, if $|c-d|>1$, then  $s_p(\min(c,d),\max(c,d))$ has repeated $0$'s and so $\lambda(m,n,p)$ has repeated dimensions.
\end{proof}

\section{Proofs}\label{S:Proofs}
First we assemble some lemmas beginning with a special case of Theorem~\ref{Theorem2}.
\begin{lemma}\label{Lemma1}
If $1 \leq m,n<p$, then $\lambda(m,n,p)$ is standard if{f} $m+n \leq p+1$.
\end{lemma}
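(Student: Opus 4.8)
My plan is to feed the hypotheses directly into Definition~\ref{D:one} and Proposition~\ref{P:one}. First, since $\lambda(m,n,p)=\lambda(n,m,p)$ (because $V_m\otimes V_n\cong V_n\otimes V_m$), I would assume without loss of generality that $m\le n$. The key structural observation is that the constraint $1\le m\le n<p$ makes the recursion parameters essentially trivial: the integer $k$ with $p^k\le n<p^{k+1}$ equals $0$, so $p^k=1$ and $p^{k+1}=p$, and writing $n=bp^k+d$, $m=ap^k+c$ forces $c=d=0$, $b=n$, $a=m$. Hence $c+d=0$, and only Case~\ref{Case1} (when $m+n>p$) and Case~\ref{Case6} (when $m+n\le p$) can occur. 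The whole proof then reduces to reading off what Proposition~\ref{P:one} says in these two cases.

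In the regime $m+n\le p$ we are in Case~\ref{Case6}, and Proposition~\ref{P:one}(6) says $\lambda(m,n,p)$ is standard iff $k=0$; since $k=0$ always holds here, $\lambda(m,n,p)$ is standard throughout this regime. This already accounts for the ``if'' direction on $m+n\le p$ and matches the target inequality $m+n\le p+1$.

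In the regime $m+n>p$ we are in Case~\ref{Case1} with $p^{k+1}=p$. Proposition~\ref{P:one}(1) states that $\lambda(m,n,p)$ is standard iff $m+n-p=1$ and $\lambda(p-n,p-m,p)$ is standard. Thus if $m+n>p+1$ the first condition fails and $\lambda(m,n,p)$ is not standard, whereas if $m+n=p+1$ the first condition holds and I only need to check the second. The arithmetic is clean: $m+n=p+1$ gives $p-n=m-1$ and $p-m=n-1$, so the residual partition is $\lambda(m-1,n-1,p)$. Since we are in Case~\ref{Case1} we have $m\ge 2$ (if $m=1$ then $n\ge p$, contradicting $n<p$), so $m-1\ge 1$ and $n-1\ge 1$ with $(m-1)+(n-1)=p-1\le p$ and both entries below $p$. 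Hence $\lambda(m-1,n-1,p)$ itself falls under Case~\ref{Case6}, and by Proposition~\ref{P:one}(6) it is standard because $k=0$. Therefore $\lambda(m,n,p)$ is standard exactly when $m+n=p+1$ in this regime.

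Combining the two regimes yields standardness iff $m+n\le p$ or $m+n=p+1$, that is, iff $m+n\le p+1$, as claimed. I do not anticipate a genuine obstacle: notably, no induction is required, because the single reduction step in Case~\ref{Case1} lands directly in the terminal Case~\ref{Case6}. The only points demanding care are the bookkeeping that pins down $k,a,b,c,d$ from the hypotheses and the verification that the reduced pair $(m-1,n-1)$ remains within the Case~\ref{Case6} range; once these are confirmed, Proposition~\ref{P:one} does all the real work.
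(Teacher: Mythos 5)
Your argument is correct and follows essentially the same route as the paper's proof: reduce to $k=0$, $c=d=0$, observe that only Case~\ref{Case1} and Case~\ref{Case6} can occur, handle Case~\ref{Case6} directly, and in Case~\ref{Case1} use Proposition~\ref{P:one} to reduce to $\lambda(m-1,n-1,p)$, which lands back in Case~\ref{Case6}. The only cosmetic difference is that you justify standardness in Case~\ref{Case6} via Proposition~\ref{P:one}(6) rather than via the explicit formula in the remarks following Definition~\ref{D:one}; these are equivalent.
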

\begin{proof}
We can assume that $1 <m \leq n$.  In terms of Definition~\ref{D:one}, $k=0$, $c=d=0$, $m=a$, and $n=b$, leaving us in either Case~\ref{Case1} or Case~\ref{Case6}.  By the remarks on Case~\ref{Case6} after Definition~\ref{D:one}, $\lambda(m,n,p)$ is standard in Case~\ref{Case6}.  By Proposition~\ref{P:one}, $\lambda(m,n,p)$ is standard in Case~\ref{Case1} if{f} $m+n=p+1$ and $ \lambda(p-n,p-n,p)$ is standard.  But if $m+n=p+1$, $ \lambda(p-n,p-n,p)=\lambda(m-1,n-1,p)$ where $(m-1)+(n-1)=p-1$, that is, Case~\ref{Case6}, hence standard.
\end{proof}

\begin{lemma}\label{Lemma2} Here $t$ is a positive integer,  $x=(p^t \pm 1)/2$, and $y=(p^t \pm 1)/2$.
\begin{enumerate}
\item Suppose $x \leq y$.  Then for all integers $i$ in the interval $[0,(p-1)/2]$, $\lambda(i p^t+x, i p^t+y,p)$ is standard. \label{Lemma2_1}
\item For any integer $b$, if $1 \leq b \leq p-1$, then $\lambda(x,b p^t+ y,p)$ is standard.\label{Lemma2_2}
\end{enumerate}
\end{lemma}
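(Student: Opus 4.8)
The plan is to prove both parts simultaneously by strong induction on $m+n$, using that the recursion in Definition~\ref{D:one} always calls $s_p$ on arguments of strictly smaller sum, so that Proposition~\ref{P:one} trades standardness of $\lambda(m,n,p)$ for standardness of one or two smaller instances. Thus the whole argument reduces to two bookkeeping tasks: sort each configuration appearing in the lemma into the correct case of Definition~\ref{D:one} (by comparing $m+n$ to $p^{k+1}$ and $c+d$ to $p^k$), and then check that the smaller instances produced by Proposition~\ref{P:one} are again of the form in part~\eqref{Lemma2_1} or part~\eqref{Lemma2_2}, with the numerical side-conditions of Proposition~\ref{P:one} automatically satisfied. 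The ground case is Lemma~\ref{Lemma1}. Throughout, since $p$ is odd, $x,y\in\{(p^t-1)/2,(p^t+1)/2\}$, and I freely use the symmetry of $\lambda$ to reorder the two arguments.

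For part~\eqref{Lemma2_1} with $i\ge 1$ one finds $k=t$, $a=b=i$, $c=x$, $d=y$, and the size comparisons place the pair in Case~\ref{Case3} when $x+y\le p^t$, in Case~\ref{Case2} when $x=y=(p^t+1)/2$ and $i\le(p-3)/2$, and in Case~\ref{Case1} when $x=y=(p^t+1)/2$ and $i=(p-1)/2$. In Case~\ref{Case3} the condition $|c-d|=|x-y|\le 1$ holds, and Proposition~\ref{P:one} reduces the claim to $\lambda(x,y,p)$ (the $i=0$ instance) together with $\lambda(ip^t-y,\,ip^t-x,\,p)$; writing $ip^t-y=(i-1)p^t+(p^t-y)$ and noting $p^t-x,p^t-y\in\{(p^t-1)/2,(p^t+1)/2\}$ exhibits the latter as exactly the $(i-1)$ instance of part~\eqref{Lemma2_1}. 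The two exceptional configurations reduce, via Case~\ref{Case2} and Case~\ref{Case1} respectively, to the same $i$ with $(x,y)=((p^t-1)/2,(p^t-1)/2)$ and to $\lambda((p^{t+1}-1)/2,(p^{t+1}-1)/2,p)$; both have strictly smaller sum and are again of the stated form.

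The heart of the matter is the $i=0$ instance $\lambda(x,y,p)$. If $t=1$ then $x,y<p$ and $x+y\le p+1$, so Lemma~\ref{Lemma1} applies. If $t\ge 2$ then $k=t-1$, $a=b=(p-1)/2$, and $c,d\in\{(p^{t-1}-1)/2,(p^{t-1}+1)/2\}$, so the three choices of $(x,y)$ fall into Case~\ref{Case1} or Case~\ref{Case3}; the crucial algebraic point is the identity
\[
\frac{(p-2)p^{t-1}\pm 1}{2}=\frac{p-3}{2}\,p^{t-1}+\frac{p^{t-1}\pm 1}{2},
\]
which rewrites the second argument produced by Case~\ref{Case3} as precisely the $i=(p-3)/2$ instance of part~\eqref{Lemma2_1} at level $t-1$, while the first argument is an $i=0$ instance at level $t-1$. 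Part~\eqref{Lemma2_2} is handled the same way: with $m=x$, $n=bp^t+y$ one has $a=0$, $c=x$, $d=y$, $k=t$, giving Case~\ref{Case4} when $x+y\le p^t$, Case~\ref{Case2} when $x=y=(p^t+1)/2$ and $b\le p-2$, and Case~\ref{Case1} when $x=y=(p^t+1)/2$ and $b=p-1$. In each case Proposition~\ref{P:one} returns a part~\eqref{Lemma2_2} instance with $b$ decreased, using $bp^t-y=(b-1)p^t+(p^t-y)$, or, when $b=1$, a part~\eqref{Lemma2_1} instance with $i=0$.

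I expect the main obstacle to be purely the case analysis and the attendant rewriting: verifying that after each application of Proposition~\ref{P:one} the reduced arguments simplify exactly into the templates $ip^t+(p^t\pm1)/2$ or $x,\,bp^t+(p^t\pm1)/2$. Two points require care. First, several reductions move between levels $t-1$, $t$, and $t+1$ (for example the $i=(p-1)/2$ configuration ascends to an $i=0$ instance at level $t+1$), so the induction must be on $m+n$ and not on $t$. Second, the side-conditions $m+n-p^{k+1}=1$ and $c+d-p^k=1$ of Proposition~\ref{P:one} must be confirmed in the Case~\ref{Case1} and Case~\ref{Case2} configurations; these hold automatically, since in every such configuration $x=y=(p^t+1)/2$ forces $c+d-p^k=1$, and the boundary values of $i$ or $b$ force $m+n-p^{k+1}=1$.
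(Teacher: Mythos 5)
Your proposal is correct and follows essentially the same route as the paper: both arguments reduce each configuration through Proposition~\ref{P:one} and the case analysis of Definition~\ref{D:one}, verify that every reduced pair lands back in the two templates $ip^t+(p^t\pm1)/2$ and $(x,bp^t+(p^t\pm1)/2)$, and bottom out at Lemma~\ref{Lemma1}. The only difference is organizational --- you run a single induction on $m+n$ where the paper nests minimal-counterexample arguments on $t$, then $i$, then $b$ --- and your phrasing that Cases~\ref{Case1} and~\ref{Case2} ``return a part~\eqref{Lemma2_2} instance with $b$ decreased'' is slightly off (they return the same $b$ with the signs flipped to $(p^t-1)/2$, and $b$ only drops after a further Case~\ref{Case4} step), but this is harmless under your induction on $m+n$.
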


\begin{proof}
\eqref{Lemma2_1}  By contradiction.  Let $t$ be the least positive integer for which this is false.  For this $t$, let $i$ be the least integer for which it is false.  Next we show $i>0$.  If $t=1$, then $i$ cannot be $0$ by Lemma~\ref{Lemma1}.  If $t>1$, $x=\frac{p^t \pm 1}{2}=\frac{p-1}{2} p^{t-1}+\frac{p^{t-1} \pm 1}{2}$ and $y$ has a similar expression.  Since the result is true for $t-1$, $\lambda(x, y,p)$ is standard.  We have shown that $i>0$.

Only the first three cases of Definition~~\ref{D:one} apply but we consider Case~\ref{Case3} first because the other two reduce to this.

Case~\ref{Case3}: First $|x-y| \leq 1$.  We have just seen that $\lambda(\min(x,y),\max(x,y),p)$ is standard.  Since $((i-1)p^t +p^t-\max(x,y),(i-1)p^t+p^t-\min(x,y))=((i-1)p^t+x',(i-1)p^t+y')$ where $x'=(p^t \pm 1)/2$ and $y'=(p^t \pm 1)/2$, $\lambda((i-1)p^t+x',(i-1)p^t+y',p)$ is standard by assumption.  Hence  $\lambda(i p^t+x, i p^t+y,p)$ is standard by Proposition~\ref{P:one}.

Case~\ref{Case1}: Here $(i p^t+x)+ (i p^t+y)>p^{t+1}+1$.  The only possibility is $i=(p-1)/2$ and $x=(p^t+1)/2=y$.  Now $(p^{t+1}-(p-1)/2 \cdot p^t-(p^t+1)/2,p^{t+1}-(p-1)/2 \cdot p^t-(p^t+1)/2)=((p-1)/2 \cdot p^t +x',(p-1)/2 \cdot p^t +y')$ where $x'=(p^t-1)/2=y'$.  This is a Case~\ref{Case3} situation.  Note that $|x'-y'|=0$ and $\lambda(x',y',p)$ is standard.  In addition, $(p-1)p^t-(p-1)/2 \cdot p^t -x'=(p-3)/2 \cdot p^t +p^t-x'=(p-3)/2 \cdot p^t+x$.  Hence $\lambda((p-3)/2 \cdot p^t+x,(p-3)/2 \cdot  p^t+x,p)$ is standard since $(p-3)/2<i=(p-1)/2$.  By Proposition~\ref{P:one}, $\lambda((p-1)/2 \cdot p^t+(p^t+1)/2,(p-1)/2 \cdot p^t+(p^t+1)/2,p)$ is standard.

The treatment of Case~\ref{Case2} is similar to the treatment of Case~\ref{Case1}.

\eqref{Lemma2_2}  By contradiction.  Let $b$ the least such integer for which $\lambda(x,b p^t+ y,p)$ is not standard.  We consider the relevant cases.

Case~\ref{Case4}: $x+b p^t+y \leq p^{t+1}$ and $1 \leq x+y \leq p^t$.  Then either $x=y=(p-1)/2$ or exactly one of $x$ and $y$ equals $(p-1)/2$ while the other equals $(p+1)/2$.  Then since $(x,b p^t-y)=(x,(b-1)p^t+p^t-y)$, $\lambda(x,b p^t-y,p)$ is standard by the definition of $b$ if $b>1$ or by Part~\ref{Lemma2_1} if $b=1$.  It follows that  $\lambda(x,b p^t+y,p)$ is standard.

Case~\ref{Case2}: $x+b p^t+y \leq p^{t+1}$ but $x+y>p^t$.  Here $b \leq p-2$.  It must be that $x=y=(p+1)/2$.  Since $\lambda(x,b p^t+ y,p)$ is not standard, $\lambda((1+b)p^t-b p^t-y, (1+b)p^t-x,p)$, that is,  $\lambda(p^t-y,b p^t+p^t-x,p)$ is not standard.  This is a Case~\ref{Case4} situation since $(p^t-y)+(p^t-x)<p^t$.  Hence $\lambda(p^t-y, b p^t-p^t+x,p)=\lambda(p^t-y, (b-1)p^t+x,p)$ is not standard.  This contradicts our choice of $b$ if $b>1$ and contradicts Part~\ref{Lemma2_1} of Lemma~\ref{Lemma2} if $b=1$.

Case~\ref{Case1} is handled similarly to Case~\ref{Case2}.
\end{proof}

The next two results are just special cases of Proposition 3 of~\cite{GPX2014} but we will prove them in the setting of Definition~\ref{D:one}.

\begin{lemma}\label{Lemma3}
If $m<p^t$ and $p^t$ divides $n$, then $\lambda(m,n,p)=(n,\dots,n)$.
\end{lemma}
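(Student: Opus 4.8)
The plan is to prove Lemma~\ref{Lemma3} by induction on $t$, using the recursive structure of Definition~\ref{D:one} to reduce the claim to smaller parameters. We want to show that when $m<p^t$ and $p^t \mid n$, every one of the $m$ positive terms of $s_p(m,n)$ equals $n$; equivalently $s_p(m,n)_> = (n:m)$. Since $p^t \mid n$ we may write $n = c p^t$ for some positive integer $c$, and the hypothesis $m < p^t$ gives us a very specific location in the six-case analysis, which is where the proof will live.

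First I would set up the base case $t = 0$: here $m < p^0 = 1$ forces $m = 0$, and $s_p(0,n) = (0:n)$ has no positive terms, so the claim holds vacuously (alternatively one can treat $t=1$, where $m<p$ and $p \mid n$, directly against Definition~\ref{D:one}). For the inductive step, assume the result for smaller $t$ and locate $(m,n)$ in the definition. Let $k$ be the integer with $p^k \leq n < p^{k+1}$ and write $n = b p^k + d$, $m = a p^k + c$ as in Definition~\ref{D:one}. Because $p^t \mid n$ and $m < p^t$, I expect to land in the cases where $m$ occupies only the low-order part of the $p$-adic expansion relative to $n$'s scale; in particular $a = 0$ (so $m = c < p^k$) is the typical situation, steering us into Case~\ref{Case4} or Case~\ref{Case5}.

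The cleanest subcase is Case~\ref{Case5}, where $(m,n) = (c, b p^k)$ with $d = 0$: here the definition gives $s_1 = (b p^k : m) = (n : m)$ and $s_2 = (0 : b p^k - m)$ directly, so $s_p(m,n)_> = (n:m)$ and we are done with no induction needed. The work is in Case~\ref{Case4}, where $d > 0$; here $s_1 = s_p(m, b p^k - d)_< + 2 b p^k$ and $s_2 = (0:n-m)$, so the positive terms of $s_p(m,n)$ come from $s_1$, i.e.\ from the negative terms of $s_p(m, b p^k - d)$ shifted up by $2bp^k = 2n$. Because $s_p$ is balanced around its middle, the negative part is the negative reverse of the positive part, so $s_p(m,bp^k-d)_< = \overline{s_p(m,bp^k-d)_>}$, and adding $2n$ converts each entry $-v$ of the positive part into $2n - v$. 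The key point is to show each such value equals $n$, which amounts to showing $s_p(m, bp^k - d)_> = (n : m)$; since $p^t \mid n$ forces $p^t \mid d$ in this regime and $bp^k - d$ remains divisible by $p^t$ while $m < p^t$, this is exactly the inductive hypothesis applied to the smaller second coordinate $bp^k - d < n$.

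The main obstacle I anticipate is bookkeeping the divisibility and size constraints to confirm that $m < p^t$ and $p^t \mid n$ genuinely exclude the cases (Cases~\ref{Case1}, \ref{Case2}, \ref{Case3}, \ref{Case6}) that would mix scales and produce positive entries other than $n$ — in particular ruling out $a>0$ and verifying that the recursion in Case~\ref{Case4} strictly decreases the second coordinate while preserving both hypotheses ($p^t \mid (bp^k - d)$ and $m < p^t$). Establishing that the induction stays within the hypotheses of the lemma, rather than the algebra of the shift $+2n$, is the delicate part; once the parameters are pinned down, each case collapses to either a direct computation or a single appeal to the inductive hypothesis.
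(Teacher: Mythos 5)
Your overall strategy --- locate $(m,n)$ in Case~\ref{Case4} or Case~\ref{Case5} of Definition~\ref{D:one} and recurse --- is the same as the paper's, but two things in your write-up do not hold up as stated. First, the induction is set up on the wrong variable. You announce induction on $t$, but in the Case~\ref{Case4} reduction the pair $(m,n)$ is replaced by $(m,\,b p^k-d)$ with the \emph{same} $t$: both hypotheses $m<p^t$ and $p^t \mid (b p^k-d)$ are preserved, so nothing decreases in $t$ and the inductive hypothesis you declared is not applicable. What actually decreases is the second coordinate (the paper inducts on $f=n/p^t$, with base case $f=1$ landing in Case~\ref{Case5}). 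You clearly sense this when you write ``the inductive hypothesis applied to the smaller second coordinate,'' but that is a different induction from the one you set up, and as written the argument is circular. The fix is trivial --- induct on $n$, or on $n/p^t$ --- but it has to be made.

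Second, the Case~\ref{Case4} computation contains two false intermediate claims that happen to cancel. You assert the shift is ``$2bp^k=2n$''; since $n=b p^k+d$ with $d>0$ in Case~\ref{Case4}, in fact $2 b p^k=2n-2d \neq 2n$. You then say the key point is to show $s_p(m,b p^k-d)_>=(n:m)$; but the lemma applied to the pair $(m,b p^k-d)$ gives positive entries equal to $b p^k-d$, not $n$. The correct computation is: by induction $s_p(m,b p^k-d)_<=(-(b p^k-d):m)$, and adding $2 b p^k$ gives $(b p^k+d:m)=(n:m)$. Your two errors compensate ($2n-n=n$ versus $2 b p^k-(b p^k-d)=n$), so the conclusion is right, but the justification as written is wrong. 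By contrast, the case exclusions you flag as ``the delicate part'' are routine: $p^t \mid d$ and $d<p^k$ force $d \leq p^k-p^t$, whence $m+d<p^k$ and $m+n<(b+1)p^k \leq p^{k+1}$, so with $a=0$ only Cases~\ref{Case4} and~\ref{Case5} can occur.
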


\begin{proof}
Write $n=f p^t$ and proceed by induction on $f$.  When $f=1$, $m=0 \cdot p^t+m$ and $p^t=1 \cdot p^t +0$,  so we are in Case~\ref{Case5}, and $\lambda(m,p^t,p)=(p^t,\dots,p^t)$.  Hence the result holds  when $f=1$.  Now let $f \geq 2$ and assume that the result holds for all integers less than $f$.  Write $f p^t=b p^k+d$ where $0<b<p$ and $0 \leq d <p^k$.  Note that $t \leq k$ and $p^t$ divides $d$.  Here $m=0 \cdot p^k+m$.  We are either in Case~\ref{Case4} if $d>0$ or Case~\ref{Case5} if $d=0$.  In Case~\ref{Case5}, 
\[\lambda(m,f p^t,p)=(f p^t,\dots,f p^t)=(n, \dots,.n).\] 
In Case~\ref{Case4}, $m+d \leq p^k$ and
\[\lambda(m,b p^k+d,p)=s_p(m,b p^k-d)_<+2 b p^k.\]
Since $p^t$ divides $b p^k-d$, $\lambda(m,b p^k-d,p)=(b p^k-d,\dots,b p^k-d)$.  Hence
$s_p(p^t,b p^k-d)_<=(-b p^k+d, \dots, -b p^k+d)$ and
$\lambda(m,b p^k+d,p)=(n, \dots,n)$.
\end{proof}

\begin{lemma}\label{Lemma4}
If $t$ and $n$ are positive integers with $p^t \leq n$, then $\lambda(p^t,n,p)$ is not standard.
\end{lemma}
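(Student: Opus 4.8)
The plan is to argue by induction on $n$ (with the power $p^t$, $t\ge 1$, fixed), reading off standardness case-by-case from Proposition~\ref{P:one} and using Lemma~\ref{Lemma3} to kill the recursive sub-partitions that survive. The base of the induction is $n=p^t$: here $m=n=p^t$ forces $k=t$, $c=d=0$, $a=b=1$, so we are in Case~\ref{Case6}, and since $k=t\ge 1$ Proposition~\ref{P:one}(6) gives that $\lambda(p^t,p^t,p)$ is not standard. For the inductive step I would write $n=bp^k+d$ with $p^k\le n<p^{k+1}$, $0<b<p$, $0\le d<p^k$; because $n\ge p^t$ we have $k\ge t$.

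The structural observation that drives everything is that $m=p^t$ is itself a power of $p$, so only two configurations occur. If $k=t$ then $a=1$ and $c=0$, which leaves only Cases~\ref{Case1},~\ref{Case3},~\ref{Case6} available (since $c=0$ gives $c+d=d<p^k$, excluding Case~\ref{Case2}, and $a\ne 0$ excludes Cases~\ref{Case4},~\ref{Case5}). If $k>t$ then $a=0$ and $c=p^t$, which leaves only Cases~\ref{Case1},~\ref{Case2},~\ref{Case4},~\ref{Case5} (since $a=0$ excludes Case~\ref{Case3} and $c=p^t>0$ excludes Case~\ref{Case6}). I would treat these two configurations separately and run through the applicable cases in each.

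The recurring mechanism is the following. In every case, either the numerical side-condition of Proposition~\ref{P:one} fails outright---for instance $m+n-p^{k+1}\ne 1$ in Case~\ref{Case1}, or $c+d-p^k\ne 1$ in Case~\ref{Case2}, or $|c-d|>1$ in Case~\ref{Case3}, or $m=p^t>1$ in Case~\ref{Case5}---so that $\lambda(m,n,p)$ is immediately non-standard; or that side-condition holds and pins the surviving recursive pair to have first argument $p^t-1$ and second argument a nonzero multiple of $p^t$. In the latter situation Lemma~\ref{Lemma3} applies (first argument $p^t-1<p^t$, second divisible by $p^t$), so the sub-partition equals $(M,\dots,M)$ with $p^t-1\ge 2$ equal parts and is not standard; hence the original is not standard either. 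The single genuinely recursive branch is Case~\ref{Case4} (available only when $k>t$), where by Proposition~\ref{P:one}(4) standardness of $\lambda(p^t,n,p)$ is equivalent to that of $\lambda(p^t,bp^k-d,p)$; one checks $p^t\le bp^k-d<n$, so the induction hypothesis closes this branch.

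I expect the only real work to be the bookkeeping that confirms, in each surviving branch, that the reduced pair really has the shape $(p^t-1,\ \text{multiple of }p^t)$ needed for Lemma~\ref{Lemma3}. For example, in Case~\ref{Case1} the forced equality $m+n=p^{k+1}+1$ yields $p^{k+1}-n=p^t-1$ and $p^{k+1}-m=p^{k+1}-p^t=p^t(p^{k+1-t}-1)$; in Case~\ref{Case2} the forced $c+d=p^k+1$ gives $(a+b+1)p^k-n=p^k-d=p^t-1$ and $(a+b+1)p^k-m=p^t\bigl((b+1)p^{k-t}-1\bigr)$; and in Case~\ref{Case3} the constraints $|c-d|\le 1$ with $c=0$, $d\ge 1$ force $d=1$, whence $(a+b)p^k-n=p^k-1=p^t-1$ and $(a+b)p^k-m=bp^k$. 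Verifying these small identities across the branches, and noting that $p$ odd together with $t\ge 1$ makes the repeated length $p^t-1\ge p-1\ge 2$ (so the constant sequence is genuinely non-standard), is the whole of the remaining effort.
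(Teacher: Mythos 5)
Your proposal is correct and takes essentially the same route as the paper: the paper also proceeds by a minimal-counterexample argument on $n$ (equivalent to your induction), runs through the six cases of Definition~\ref{D:one} via Proposition~\ref{P:one}, reduces the surviving branches to pairs of the form $(p^t-1,\,\text{multiple of }p^t)$ that Lemma~\ref{Lemma3} shows give a constant (hence non-standard) partition, and closes Case~\ref{Case4} by minimality. Your explicit check that $p$ odd and $t\ge 1$ force the constant sequence to have length $p^t-1\ge 2$ is a detail the paper leaves implicit, but otherwise the two arguments coincide.
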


\begin{proof}
By contradiction.  Suppose that $n$ is the least integer $\geq p^t$ for which $\lambda(p^t,n,p)$ is standard.  Write $n =b p^k+d$ where $1 <b<p$ and $0 \leq d <p^k$, and write $p^t=a p^k+c$ where $0 \leq a<p$ and $0 \leq d <p^k$.  If $t<k$, then $a=0$ and $c=p^t$; if $t=k$, then $a=1$ and $c=0$.  We consider the six cases of Definition~\ref{D:one}.

Case~\ref{Case1}: $p^t+n>p^{k+1}$.  If $p^t+n>p^{k+1}+1$, then $\lambda(p^t,n,p)$ is not standard.  We can assume that $p^t+n=p^{k+1}+1$.  In order for $\lambda(p^t,n,p)$ to be standard in this case, $\lambda(p^{k+1}-n,p^{k+1}-p^t,p)=\lambda(p^t-1,p^{k+1}-p^t,p)$ must be standard. But by Lemma~\ref{Lemma3}, $\lambda(p^t-1,p^{k+1}-p^t,p)=(p^{k+1}-p^t,\dots, ,p^{k+1}-p^t)$ and so is not standard, implying that $\lambda(p^t,n,p)$ is not standard.

Case~\ref{Case2}: $p^t+n \leq p^{k+1}$ but $c+d > p^k$.  If $c+d> p^k+1$, $\lambda(p^t,n,p)$ is not standard.  We can assume that $c+d=p^{k}+1$.  In order for $\lambda(p^t,n,p)$ to be standard in this case, $\lambda((a+b+1)p^k-n,(a+b+1)p^k-p^t,p)=\lambda(p^t-1,(a+b+1)p^k-p^t,p)=\lambda(p^t-1,n-1,p)$ must be standard.  But by Lemma~\ref{Lemma3}, $\lambda(p^t-1,(a+b+1)p^k-p^t,p)=(n-1,\dots,n-1)$ and so is not standard, implying that $\lambda(p^t,n,p)$ is not standard.

Case~\ref{Case3}: $m+n \leq p^{k+1}$, $1 \leq c+d \leq p^k$, and $a>0$.  In this $a=1$ and $c=0$.  If $|c-d|>1$, $\lambda(p^t,n,p)$ is not standard.  Assume $d=1$.  In order for $\lambda(p^t,n,p)$ to be standard in this case, $\lambda((1+b)p^k-n,(1+b) p^k-m,p)=\lambda(p^k-1,b p^k,p)$ must be standard.  But by Lemma~\ref{Lemma3}, $\lambda(p^k-1,b p^k,p)=(b p^k, \dots, b p^k)$ and so is not standard, implying that $\lambda(p^t,n,p)$ is not standard.

Case~\ref{Case4}: $m+n \leq p^{k+1}$, $1 \leq c+d \leq p^k$, $a=0$, so $m=c=p^t$, and $d>0$.  In order for $\lambda(p^t,n,p)$ to be standard in this case, $\lambda(p^t, b p^k-d,p)$ must be standard.  By since $ b p^k-d<n$, this is not standard.

Case~\ref{Case5}: $(m,n)=(p^t,b p^k)$ with $t <k$.  In this case $\lambda(m,n,p)=(n,\dots,n)$ is not standard.

Case~\ref{Case6}: $(m,n)=(p^k,bp^k)$ with $1+b\leq p$.  In this case, $\lambda(m,n,p)=(n,\dots,n)$ is not standard.
\end{proof}


\begin{proof}[Proof of Theorem~\ref{Theorem2}]
First we show that if $(m,n) \in S$, then $\lambda(m,n,p)$ is standard.  Notice that by the construction of $S$, $(m,n) \in S$ implies $m \leq n$.

By contradiction.  Let $(m,n)$ be an element of $S$ such that $\lambda(m,n,p)$ is not standard with $m+n$ as small as possible.  Then whenever $(m',n') \in S$ with $m'+n'<m+n$, $\lambda(m',n',p)$ is standard.  Hence $1 <m <p$ because $\lambda(1,n,p)$ is standard for all $n$.  Since if $(m,n) \in S_1$ then $\lambda(m,n,p)$ is standard by Lemma~\ref{Lemma1}, $m+n >p+1$ and $n=b p+d$ with $b \geq 1$ and $m-1 \leq d \leq p+1-m$.  So $d \geq 1$, $m+d \leq p+1$, and $m+p-d \leq p+1$.

Suppose that $p^k \leq n < p^{k+1}$ where $k \geq 1$, and write $n=b_1 p^k+d_1$ where $1 \leq b_1 <p$ and $0 \leq d_1 < p^k$.    Write $d_1=r p+d$, where $0 \leq r <p^{k-1}$ and $0 \leq d<p$.  So $n=b p+d$ where $b=b_1 p^{k-1}+r$.  Since $(m,n) \in S$, $m+d \leq p+1$ and $m+p-d \leq p+1$.

We now check out the relevant cases of Definition~\ref{D:one}.  The fact that $m<p \leq  p^k$ rules out Case~\ref{Case3}, and the fact that $d>0$ rules out Cases~\ref{Case5} and~\ref{Case6}.

Case~\ref{Case1}: $m+n>p^{k+1}$.  This implies $m+d>p$, and since $(m,n) \in S$, $m+d=p+1$.  Thus $m+n=p^{k+1}+1$, and $b_1=p-1$.  Then
\[(p^{k+1}-n,p^{k+1}-m)=(m-1,n-1)=(m-1,b p+d-1).\]

Note that $m-1+d-1 =p-1$ and $m-1+p-d+1=m+p-d \leq p+1$.  Thus $(m-1,b p+d-1)=(m-1,n-1) \in S$ and since $m-1+n-1 <m+n$, $\lambda(m-1,n-1,p)$ is standard, implying that $\lambda(m,n,p)$ is since by Definition~\ref{D:one}, $\lambda(m,n,p)$ consists of the top dimension $p^{k+1}$ and the $m-1$ dimensions of $\lambda(m-1,n-1,p)$.

Case~\ref{Case2}: $m+n \leq p^{k+1}$ but $m+d_1>p^k$.  This implies $m+d>p$, and so $m+d=p+1$ and $m+d_1=p^k+1$.  Then
\[((b_1+1)p^k-n,(b_1+1)p^k-m)=(m-1,n-1).\]
As in Case~\ref{Case1}, $\lambda(m-1,n-1,p)$ is standard, implying that $\lambda(m,n,p)$ is since by Definition~\ref{D:one}, $\lambda(m,n,p)$ consists of the top dimension $(b+1)p^k$ and the $m-1$ dimensions of $\lambda(m-1,n-1,p)$.

Case~\ref{Case4}: $m+n \leq p^{k+1}$, $1 \leq m+d_1 \leq p^k$. Since $(m,n) \in S$, we know $m+d \leq p+1$ and $m+p-d \leq p+1$.  Also
\[ s_p(m,b_1 p^k+d_1)_>=s_p(m,b_1 p^k-d_1)_<+2 b_1 p^k.\]
But $b_1 p^k-d_1=b_1 p^k-r p-d=(b_1 p^{k-1}-r-1)p+p-d=(b-2 r-1)p+p-d$.  If $b_1 p^k-d_1=p-d$, then $m \leq p-d$, and so $(m,b_1 p^k-d_1)\in S_1 \subset S$.  If $b_1 p^k-d_1>p-d$, then $(m,b_1 p^k-d_1)\in S_2 \subset S$.  Hence, in both cases, $(m,b_1 p^k-d_1) \in S$,  and so $\lambda(m,b_1 p^k-d_1,p)$ is standard, implying $\lambda(m,n,p)$ is as well.


Now we show that if $\lambda(m,n,p)$ is standard with $1 \leq m <p$ and $n \geq m$, then $(m,n) \in S$.  We proceed by contradiction.  Let $\lambda(m,n,p)$ be standard with $(m,n) \notin S$ such that $m+n$ is as small as possible.  Thus whenever $\lambda(m',n',p)$ is standard with $m'<p$ and $m'+n'< m+n$, $(m,n) \in S$.  Since $(1,n) \in S$ for every $n$, $m >1$.  Since an element $(k,d)$ with $1 \leq k \leq d < p$ is standard if{f} $k+d \leq p+1$, it follows that $n \geq p$, and so $n=b p+d$ with $b \geq 1$ and $0 \leq d<p$.  We now check out the relevant cases of Definition~\ref{D:one} and show $m+d \leq p+1$ and $m+ p-d \leq p+1$ in each case.

Again suppose that $p^k \leq n < p^{k+1}$ where $k \geq 1$, and write $n=b_1 p^k+d_1$ where $1 \leq b_1 <p$ and $0 \leq d_1 < p^k$.    Write $d_1=r p+d$, where $0 \leq r <p^{k-1}$ and $0 \leq d<p$.  So $n=b p+d$ where $b=b_1 p^{k-1}+r$.  Cases~\ref{Case6} and~\ref{Case3} are ruled out because $0<m<p^k$.  Since $m>1$, we rule out Case~\ref{Case5}. 

Case~\ref{Case1}: $m+n>p^{k+1}$.  Then, by Proposition~\ref{P:one}, since  $\lambda(m,n,p)$ is standard, $m+n=p^{k+1}+1$, implying $m+d=p+1$, and $\lambda(p^{k+1}-n,p^{k+1}-m,p)=\lambda(m-1,n-1,p)=\lambda(m-1,b p+d-1,p)$ is standard, hence in $(p^{k+1}-n,p^{k+1}-m)\in S$.  In particular, $m-1+p-(d-1) \leq p+1$, that is, $m+p-d \leq p+1$.  We have shown that $(m,n) \in S$.

Case~\ref{Case2}: $m+n \leq p^{k+1}$ but $m+d_1>p^k$.  Then, by Proposition~\ref{P:one}, since  $\lambda(m,n,p)$ is standard, $m+d_1=p^k+1$, implying $m+d=p+1$, and  $\lambda((b_1+1)p^k-n,(b_1+1)p^k-m,p)=\lambda(m-1,n-1,p)=\lambda(m-1,b p+d-1,p)$ is standard, hence in $((b_1+1)p^k-n,(b_1+1)p^k-m) \in S$.  Thus $m-1+p-(d-1) \leq p+1$, that is, $m+p-d \leq p+1$.  We have shown that $(m,n) \in S$.

Case~\ref{Case4}: $m+n \leq p^{k+1}$, $1 \leq m+d_1 \leq p^k$, and $d_1>0$.  Then $\lambda(m,b_1 p^k-d_1,p)=\lambda(m,(b-2 r-1)p+p-d,p)$ is standard, hence $(m,b_1 p^k-d_1) \in S$.  If $(m,b_1 p^k-d_1) \in S_2$, then $m+p-d \leq p+1$ and $m+p-(p-d)=m+d \leq p+1$.  Hence $(m,n) \in S$.  If $(m,b_1 p^k-d_1)=(m,p-d) \in S_1$, then $m \leq p-d <m+p-d \leq p+1$, and $(m,n) \in S$ in this case as well.

\end{proof}


\begin{proof}[Proof of Theorem~\ref{Theorem3}]
First we show that if $(m,n) \in S$ (with $m \leq n$), then $\lambda(m,n,p)$ is standard.

By contradiction.  Let $(m,n)$ be an element of $S$ such that $\lambda(m,n,p)$ is not standard with $m+n$ as small as possible.  Then whenever $(m',n') \in S$ with $m'+n'<m+n$, $\lambda(m',n',p)$ is standard. 

Now we show that if $n<p^{t+1}$ and $(m,n)\in S$, then $\lambda(m,n,p)$ is standard.  Since $(m,n) \in S$ and $m\leq n<p^{t+1}$, $(m,n)=(i p^t+x,j p^t+y)$  where  $1 \leq i \leq (p-1)/2$, $ i \leq j \leq p-i-1$, $x=(p^t\pm 1)/2$, and $y=(p^t\pm 1)/2$ with $x \leq y$ if $j=i$.  We go through the cases.

Case~\ref{Case1}: $m+n> p^{t+1}$.  It must be that $m+n=p^{t+1}+1$ with $i+j=p-1$, and so $x=y=(p^t+1)/2$.  Then
\[(p^{t+1}-n,p^{t+1}-m)=(i p^t+(p^t-1)/2,(p-i-1)p^t+(p^t-1)/2) \in S,\]
hence $\lambda(p^{t+1}-n,p^{t+1}-m,p)$ is standard, implying $\lambda(m,n,p)$ is standard since $\lambda(m,n,p)$ consists of the top dimension $p^{t+1}$ followed by the dimensions of $\lambda(p^{t+1}-n,p^{t+1}-m,p)$.

Case~\ref{Case2}: $m+n \leq p^{t+1}$ but $x+y >p^t$.  This implies $x=y=(p^t+1)/2$.  Then
\[((i+j+1)p^t-n,(i+j+1)p^t-m)=(i p^t+(p^t-1)/2,j p^t+(p^t-1)/2) \in S,\]
hence $\lambda((i+j+1)p^t-n,(i+j+1)p^t-m,p)$ is  standard, implying $\lambda(m,n,p)$ standard also since $\lambda(m,n,p)$ consists of the top dimension $(i+j+1)p^t$ followed by the dimensions of $\lambda((i+j+1)p^t-n,(i+j+1)p^t-m,p)$.

Case~\ref{Case3}: $1 \leq x+y \leq p^t$.  There are three cases here.  We will treat the case $x=(p^t-1)/2$ and $y=(p^t+1)/2$ --- the others are similar.  Then  
\[((i+j)p^t-n,(i+j)p^t-m)=((i-1)p^t +(p^t-1)/2,(j-1)p^t +(p^t+1)/2).\]
If $i \geq 2$, then $((i-1)p^t +(p^t-1)/2,(j-1)p^t +(p^t+1)/2) \in S$, hence $\lambda((i-1)p^t +(p^t-1)/2,(j-1)p^t +(p^t+1)/2,p)$ is standard, and so is $\lambda(m,n,p)$.  If $i=1$, then $\lambda((p^t-1)/2,(j-1)p^t +(p^t+1)/2,p)$ is standard by Lemma~\ref{Lemma2}, hence $\lambda(m,n,p)$ is. 

Cases~\ref{Case4} and \ref{Case5} do not apply because $i>0$, and Case~\ref{Case6} does not apply because $x$ and $y$ are positive.

Since we have just shown that $\lambda(m,n,p)$ is standard when $n<p^{t+1}$, we can assume that $n \geq p^{t+1}$.

Suppose that $p^k \leq n < p^{k+1}$ where $k \geq t+1$, and write $n=b_1 p^k+d_1$ where $1 \leq b_1 <p$ and $0 \leq d_1 < p^k$.    Write $d_1=r p^t+d$, where $0 \leq r <p^{k-t}$ and $0 \leq d<p^t$.  So $n=b p^t+d$ where $b=b_1 p^{k-t}+r$.  Since $(m,n) \in S$, $m= i p^t+x$ where $x=(p^t \pm 1)/2$ and $d=(p^t \pm 1)/2$.

Case~\ref{Case1}: $m+n>p^{k+1}$.  It must be that $x=d=(p^t+1)/2$ and $m+n=p^{k+1}+1$.  Then
\[(p^{k+1}-n,p^{k+1}-m)=(m-1,n-1) \in S,\]
hence $\lambda(p^{k+1}-n,p^{k+1}-m,p)$ is standard, implying $\lambda(m,n,p)$ standard.

Case~\ref{Case2}: $m+n \leq p^{k+1}$, but $m+d_1>p^k$.  Then $x+d>p^t+1$ and so $x=d=(p^t+1)/2$. Then 
\[((b_1+1)p^k-m,(b+1)p^k-m)=(m-1,n-1) \in S,\]
hence $\lambda((b_1+1)p^k-m,(b+1)p^k-m,p)$ is standard, implying $\lambda(m,n,p)$ standard.

Cases~\ref{Case3} and~\ref{Case6} do not apply.  Case~\ref{Case5} does not apply since $m \geq p^t \geq p>1$.

Case~\ref{Case4}: Here $m+d_1 \leq p^k$ with $d_1>0$.  We must show that $(m,b_1 p^k-d_1) \in S$.  Since $(m,n) \in S$, $n=\ell p^{t+1} + j p^t+y$, where $\ell \geq 0$, $i \leq j \leq p-i-1$, and $y=(p^t \pm 1)/2$.  We need to compare this representation of $n$ with $n= b_1 p^k+d_1$.  Since $\ell p^{t+1} <p^{k+1}$, $\ell< p^{k-t}$, and we can write $\ell=b p^{k-t-1}+e$ where $0<b<p$ and $0 \leq e <p^{k-t-1}$.  Therefore $n=b p^k+e p^{t+1}+j p^t+y$.  Since 
\[e p^{t+1}+j p^t+y\leq (p^{k-t-1}-1)p^{t+1}+ j p^t+y=p^k-(p^{t+1}-j p^t-y)<p^k,\]
it follows that $b=b_1$ and $d_1=e p^{t+1}+j p^t+y$.  Therefore
\begin{align*}
b_1 p^k-d_1&=b_1 p^k-e p^{t+1}-j p^t-y\\
&=b_1 p^k-(e-1)p^{t+1}+p^{t+1}-j p^t -y\\
&=(b_1 p^{k-t-1}-e-1)p^{t+1} +(p-j-1) p^t+ p^t-y.
\end{align*}

Since $b_1 p^{k-t-1}-e-1 \geq 0$ with equality if{f} $b_1=1$ and $e=p^{k-t-1}-1$, and $i \leq p-j-1 \leq p-i-1$, $(m,b_1 p^k-d_1) \in S$, hence $\lambda( m,b_1 p^k-d_1,p)$ is standard, implying that $\lambda(m,n,p)$ is standard.


Now we show that if $(m,n)$ is standard with $p^t \leq m < p^{t+1}$ and $m \leq n$, then $(m,n) \in S$.

By contradiction.  Let $(m,n)$ be a standard element not in $S$ with $m+n$ as small as possible.  Thus every standard $(m',n')$ with $m' \geq p^t$ and $m'+n'< m+n$ is an element of $S$.  Note that $m>p^t$ by Lemma~\ref{Lemma4}.

Now we show that if $n<p^{t+1}$ and $(m,n)$ is standard, then $(m,n) \in S$.  Write $m=a p^t+c$ and $n=b p^t+d$  where $1 \leq a \leq b <p$ and $0 \leq c,d <p^t$.  We go through the cases.

Case~\ref{Case1}: $m+n> p^{t+1}$.  By Proposition~\ref{P:one},  it must be that $m+n=p^{t+1}+1$ and that $\lambda(p^{t+1}-n,p^{t+1}-m,p)=\lambda(m-1,n-1,p)$ is standard, hence in S.  So $(m-1,n-1)=(i p^t +x, j p^t +y)$ where $x=(p^t \pm 1)/2$, $y=(p^t \pm 1)/2)$.  Since $m-1+n-1=p^{t+1}-1$, $x+y=p^{t}-1$.  Hence $x=y=(p^t - 1)/2$, and so $(m,n)=(i p^t +(p^t+1)/2,j p^t+(p^t+1)/2) \in S$.

Case~\ref{Case2}: $m+n \leq p^{t+1}$ but $c +d >p^t$.  By Proposition~\ref{P:one}, it must be that $c+d=p^{t}+1$ and $\lambda((a+b+1)p^t-n,(a+b+1)p^t-m)=\lambda(m-1,n-1,p)$ is standard, hence in $S$.  So $(m-1,n-1)=(i p^t +x, j p^t +y)$ where $x=(p^t \pm 1)/2$, $y=(p^t \pm 1)/2)$. Since $m-1+n-1=p^{t+1}-1$, $x+y=p^{t}-1$.  Hence $x=y=(p^t - 1)/2$, and so $(m,n)=(i p^t +(p^t+1)/2,j p^t+(p^t+1)/2) \in S$.

Case~\ref{Case3}: $ 1 \leq c+d \leq p^t$.  By Proposition~\ref{P:one}, $\lambda(\min(c,d),\max(c,d),p)$ is standard, $|c-d| \leq 1$, and $\lambda((a+b)p^t-n, (a+b)p^t-m,p)=\lambda((a-1)p^t+p^t-d,(b-1)p^t+p^t-c,p)$ is standard.  If $a \geq 2$, then $( (a-1)p^t+p^t-d,(b-1)p^t+p^t-c) \in S$.  So $p^t-d=(p^t \pm 1)/2$ and $p^t-c=(p^t \pm 1)/2$.  So either $c=d=(p^t - 1)/2$ or one of $c$ and $d$ is $(p^t - 1)/2$ while the other is $(p^t + 1)/2$.  Thus $(m,n) \in S$.  Suppose that $a=1$.  Since $\lambda(p^t-d,(b-1)p^t+p^t-c,p)$ is standard by Lemma~\ref{Lemma2}, $p^t-d+p^t-c \leq p^t+1$.  But $c+d \leq p^t$, so $p^t-d+p^t-c =p^t$ or $p^t-d+p^t-c =p^t+1$.  Now $|c-d| \leq 1$ implies either $p^t-d=p^t-c=(p^t+1)/2$ or one of $p^t-d$ and $p^t-c$ equals $(p^t-1)/2$ while the other equals $(p^t+1)/2$.  In any case, $(m,n) \in S$.

Cases~\ref{Case4},~\ref{Case5}, and~\ref{Case6} do not apply.

We can assume that $n \geq p^{t+1}$.  Suppose that $p^k \leq n < p^{k+1}$ where $k \geq t+1$, and write $n=b_1 p^k+d_1$ where $1 \leq b_1 <p$ and $0 \leq d_1 < p^k$.    Write $d_1=r p^t+d$, where $0 \leq r <p^{k-t}$ and $0 \leq d<p^t$.  So $n=b p^t+d$ where $b=b_1 p^{k-t}+r$.  Recall that $m= i p^t+x$ where $x=(p^t \pm 1)/2$.

Case~\ref{Case1}: $m+n>p^{k+1}$.  It must be than $m+n=p^{k+1}+1$, so $m+d_1=p^k+1$ and $x+d=p^t+1$.  Also $(p^{k+1}-n,p^{k+1}-m)=(m-1,n-1)$ is standard, hence in $S$.  So $(m-1,n-1)=(i p^t+(x-1), j p^t+(d-1) +\ell p^{t+1})$ where $(x-1)+(d-1)=p^t-1$.  Hence $x-1=d-1=(p^t-1)/2$ and $(m,n) \in S$.

Case~\ref{Case2}: $m+n \leq p^{k+1}$ but $m +d_1 >p^k$.  It must be that $m+d_1=p^k+1$ and $x+d=p^{t}+1$.  Also $((b_1+1) p^k-n,(b_1+1) p^k-m)=(m-1,n-1)$ is standard, hence in $S$.  So $(m-1,n-1)=(i p^t+(x-1), j p^t+(d-1) +\ell p^{t+1})$ where $(x-1)+(d-1)=p^t-1$.  Hence $x-1=d-1=(p^t-1)/2$ and $(m,n) \in S$.

Case~\ref{Case3} does not apply.

Case~\ref{Case4}: $1 \leq m+d_1 \leq p^k$ and $d_1>0$.  Then $(m, b_1 p^k-d_1)$ is standard, hence in $S$.  So
$b_1 p^k-d_1=n-2 d_1=b p^t+d-2(r p^t+d)=b p^t-2 rp^t-d=(b_1 p^{k-t}-r-1)p^t +p^t-d$ where $p^t-d=(p^t \pm 1)/2$.  Thus $d=(p^t \pm 1)/2$ and $n=(b_1 p^{k-t}+r)p^t+d$ and $(m,n) \in S$.

Cases~\ref{Case5} and~\ref{Case6} do not apply.
\end{proof}

\section{Conclusion}
We end with two questions:
\begin{enumerate}
\item What are necessary and sufficient conditions for $\lambda(m,n,2)$ to be standard?
\item In~\cite{B2011_2}, we identified generators for the cyclic modules $V_{\lambda_i}$ in terms of bases for $V_m$ and $V_n$ when $p\geq n+m-1$.  Are these still generators in all the cases when $\lambda(m,n,p)$ is standard?
\end{enumerate}


\end{document}